\newtheorem{thm}{Theorem}
\newtheorem{lemma}{Lemma}
\theoremstyle{remark}
\newtheorem{remark}{Remark}
\newcommand{\R}{\ensuremath{\mathbb{R}}}
\newcommand{\T}{\ensuremath{\mathbb{T}}}
\newcommand{\Z}{\ensuremath{\mathbb{Z}}}
\newcommand{\N}{\ensuremath{\mathbb{N}}}
\newcommand{\bfa}{\ensuremath{\mathbf{a}}}
\newcommand{\bfq}{\ensuremath{\mathbf{q}}}
\newcommand{\la}{\ensuremath{\lambda}}
\newcommand{\eq}{\begin{equation}}
\newcommand{\ee}{\end{equation}}
\newcommand{\form}{\mathfrak{f}}
\newcommand{\dimension}{n}
\newcommand{\theprimes}{\mathbb{P}}
\DeclareMathOperator{\lcm}{lcm}
\newcommand{\Gammakn}[1][{n,k}]{\Gamma_{#1}}
\newcommand{\singseries}{\mathfrak S(\lambda; \mathbf a, \mathbf q)}
\numberwithin{equation}{section}
\title{Quantitative $l^p$-improving for discrete spherical averages along the primes}
\author[T. Anderson]{Theresa C. Anderson}
\address{
	Department of Mathematics
	\\ Purdue University
	\\ 150 N. University St.
	\\ W. Lafayette, IN 47907
	\\	U.S.A.
}
\email{tcanderson@purdue.edu}
\begin{document}
\maketitle
%\tableofcontents

\begin{abstract}
We show quantitative (in terms of the radius) $l^p$-improving estimates for the discrete spherical averages along the primes.  These averaging operators were defined in \cite{ACHK} and are discrete, prime variants of Stein's spherical averages.  The proof uses a precise decomposition of the Fourier multiplier.  
\end{abstract}

\section{Introduction}

The search for $L^p(\R^n)$-improving capabilities of operators in harmonic analysis is a classic line of investigation.  It is interesting in many cases to see how much the operator ``improves" the the $L^p$ norm (by achieving a higher value of $p$).  In particular, determining these bounds for operators involving integration over a curved submanifold is an active research area (see the early work of Littman \cite{Littman} and Strichartz \cite{Str}).  With discrete operators, that is operators defined over the integer lattice, $l^p(\Z^n)$ spaces behave differently; due to the nesting properties, ``improving" seems to become a trivial consequence of $l^p$-inequalities.  However this is not completely the case, as nontrivial quantitative improving estimates can shed light on the behavior of these operators.    

In this paper, we consider the discrete spherical averaging operator along the primes, first developed (to the best of our knowledge) in \cite{ACHK}.  This is a discrete, prime variant of Stein's spherical averaging operator, and number theoretic properties therefore come into play in its analysis.  For more information and history, see \cite{ACHK}.  Inspired by recent interest in $l^p$-improving for the integer version of this operator in \cite{Hughes}, \cite{KL}, \cite{K1}, we prove quantitative $l^p(\Z^n)$ improving estimates for the discrete spherical averages along the primes in terms of the radius $\lambda$.  These take the form of $l^p(\Z^n) \to l^{p'}(\Z^n)$ bounds for any $1\leq p \leq 2$, and the decay rate improves as $p$ approaches 1.  

To state our main theorems, we require a few definitions.  Discrete spherical averages were introduced by Magyar in \cite{Magyar discrete}:
\[ S_\lambda f(\mathbf x) := \frac 1{\#\{ \mathbf y \in \mathbb Z^n : |\mathbf y|^2 = \lambda \} } \sum_{|\mathbf y|^2 = \lambda} f(\mathbf x - \mathbf y) \]
where $|y|^2 = y_1^2 + \dots y_n^2$.
These played a role in Magyar--Stein--Wainger's proof of sharp $l^p(\Z^n)$ bounds for the corresponding maximal function \cite{MSW}
\[ S_*f (\mathbf x) := \sup_{\lambda \in \mathbb N} |S_\lambda f(\mathbf x) |. \]
Note that we have chosen to define these averages along spheres of radius $\lambda^{1/2}$.

Recently both Hughes \cite{Hughes} and Kesler--Lacey \cite{KL} used Magyar and Magyar--Stein--Wainger's techniques to find $l^p$-improving estimates for these spherical averages, with decay in terms of $\lambda$, showing that for dimensions 5 and greater, and $\frac{n}{n-2} \leq p \leq 2$, there exists constants independent of $\lambda$, such that for all $\lambda \in \N$,
\[
\|S_\lambda f\|_{l^{p'}(\Z^n)} \lesssim\lambda^{-\eta_p}\|f\|_{l^p(\Z^n)}
\]
where $\eta_p = \frac{n}{2}(\frac{2}{p}-1)$.  (At first glance Hughes's and Kesler--Lacey's decay rate might look different, but we remind the reader that Kesler--Lacey define their averages along spheres of radius $\lambda$, not $\lambda^{1/2}$, so these decay rates are the same.)    This decay rate is also optimal in this range of $p$.  At publication time, these ranges obtained have been updated to the wider $\frac{n-1}{n+1} \leq p \leq 2$ in \cite{KL} and $\frac{n+1}{n+1} \leq p \leq 2$ in \cite{Hughes} (with an $\varepsilon$ loss).  Please see both of these papers, as well as the related works \cite{K1}, \cite{K2}, \cite{KLM}, for more discussion of these bounds, as well as other related results.  

The discrete averaging operators we consider are averages over the prime vectors (or 'prime points') on the algebraic surface
\begin{equation}
\label{sphere}
\form(\mathbf x) := |\mathbf x |^k := x_1^k+ \dots + x_n^k = \lambda, 
\end{equation} 
The Waring--Goldbach problem in analytic number theory involves the study of these points.  Classic work of Hua \cite{Hua_book} established an asymptotic for the number of these points; as long as we restrict to a specific arithmetic progression $\Gamma_{n,k}$, 
\begin{equation}
\label{Hua}
P(\lambda) \sim \mathfrak S_{n,k}(\lambda) \lambda^{{n/k - 1}},
\end{equation} 
where $\mathfrak S_{n,k}$ is a singular series that for our purposes can be regarded as a constant, and $P(\lambda)$ denote the number of prime solutions of \eqref{sphere},
\[ P(\lambda) = \sum_{\form(\mathbf p) = \lambda} \log\mathbf p, \] counted with logarithmic weights where $\log\mathbf x = (\log x_1) \cdots (\log x_n)$ and ${\bf p}$ is a vector with all coordinates prime.  It is natural to count these prime lattice points with density due to the prime number theorem.  

The authors in \cite{ACHK} study an ergodic version of this problem by asking quantitative distributional questions about these points, and answer these by proving $l^p(\Z^n)$ bounds for the discrete spherical maximal function along the primes.  We consider these discrete spherical averages defined for ${\bf l}\in\Z^n$ 
\begin{equation}\label{primeavg} 
A_\lambda f({\bf l}) := \sigma_\lambda \star f({\bf l}) = \frac{1}{P(\lambda)} \sum_{|{\bf p}|^k=\lambda} (\log{\bf p}) f({\bf p-l}). 
\end{equation}
where $\sigma_\lambda$ is a probability measure on $\Z^n$ defined by
\[ \sigma_\lambda({\mathbf x}) := \frac{1}{P(\lambda)}{\bf 1}_{\{{\mathbf p} \in \theprimes^\dimension: |\mathbf p|^k=\lambda\}}({\mathbf x}) \log {\mathbf x}, \]
and $\lambda \in \Gamma_{n,k}$, an infinite arithmetic progression of radii where the Hua asymptotic \ref{Hua} holds.
Let $1 \leq p \leq 2$ and define $n_0(k) = 2^k+1$ when $k = 2, 3$ or $4$, and
\[ n_0(k) = k^2 + 3 - \max_{1 \le j \le k-1} \left\lceil \frac {kj - \min(2^j,j^2+j)}{k-j+1} \right\rceil \]
when $k \ge 5$.  Let $\lambda \in \Gamma_{n,k}$.  We now recall one of the main results in \cite{ACHK} as we will use part of this decomposition.  For more details, background, and other subtleties of the ergodic Waring-Goldbach problem, see \cite{ACHK}.

 For an integer $q \ge 1$, we write $\mathbb Z_q = \mathbb Z/q\mathbb Z$ and $\mathbb U_q = \mathbb Z_q^*$, the group of units. If $\mathbf q = (q_1, \dots, q_n) \in \mathbb Z^n$, with $\mathbf q \ge 1$ (where we mean that $q_i \ge 1$ for all $i$), write $\mathbb U_{\mathbf q} = \mathbb U_{q_1} \times \dots \times \mathbb U_{q_n}$; also we set $\mathbf{a/q} = (a_1/q_1, \dots, a_n/q_n)$ and $\bfa\bfq = (a_1q_1, \dots, a_nq_n)$ if $\mathbf a = (a_1, \dots, a_n)$ is another vector. For $\lambda \in \mathbb Z$ and $\mathbf a, \mathbf q \in \mathbb Z^n$, with $\mathbf q \ge 1$, define
\begin{gather*} 
g(a,q; b,r) = \frac 1{\varphi([q,r])} \sum_{x \in \mathbb U_{[q,r]}} e\bigg( \frac {ax^k}q + \frac {bx}r \bigg), \\
\singseries = \sum_{q=1}^\infty \sum_{a \in \mathbb U_q} e( -\lambda a/q ) \prod_{i=1}^\dimension g(a, q; a_i, q_i),
\end{gather*}
where $\varphi$ is Euler's totient function and $[q,r] = \lcm[q,r]$. Choose a smooth bump function $\psi$ such that
\[ \mathbf 1_{\mathcal Q}(\mathbf x) \le \psi(\mathbf x) \le \mathbf 1_{\mathcal Q}(\mathbf x/2), \]
where $\mathbf 1_{\mathcal Q}$ is the indicator function of the cube $\mathcal Q = [-1,1]^n$.  Finally,  write $\psi_t(\mathbf x) = \psi(t\mathbf x)$.

\begin{thm}[From \cite{ACHK}] \label{thm:approximation_formula}
Let $k \geq 2$ and $n$ be large enough. Also, let $\lambda \in \Gammakn$ be large, and suppose that $\lambda^{1/k} \le N \lesssim \lambda^{1/k}$.  For any fixed $B>0$, there exists a $C = C(B) > 0$ such that one has the decomposition 
\begin{equation*}
\widehat{\sigma_\lambda} (\bm\xi) = \frac {\lambda^{n/k-1}}{P(\lambda)} \sum_{ 1 \le \bfq \le Q} \sum_{\bfa \in \mathbb U_{\bfq}} \singseries \psi_{N/Q}(\bfq\bm\xi-\bfa) \widetilde{d\sigma_{\lambda}}(\bm\xi-\bfa/\bfq) + \widehat {E_\lambda}(\bm\xi) := \widehat {M_\lambda}(\bm\xi)+  \widehat {E_\lambda}(\bm\xi),
\end{equation*}
where $Q = (\log N)^C$ and $\widetilde{d\sigma_{\lambda}}$ is the Fourier transform of the continuous $k$-spherical surface measure.
\end{thm}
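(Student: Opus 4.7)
\medskip

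\noindent\textbf{Proof proposal.}
The plan is to run the Hardy--Littlewood circle method on the exponential sum representation of $\widehat{\sigma_\lambda}$. First, I would write
\[
\widehat{\sigma_\lambda}(\bm\xi) = \frac{1}{P(\lambda)} \int_0^1 \bigg( \prod_{i=1}^{n} T(\alpha,\xi_i) \bigg) e(-\lambda\alpha)\, d\alpha,
\qquad
T(\alpha,\beta) = \sum_{p \le N} (\log p)\, e(\alpha p^k + \beta p),
\]
so that the Waring--Goldbach counting function appears as a Fourier coefficient of a product of prime exponential sums. The next step is a Farey dissection of $[0,1]$ into major arcs
$\mathfrak{M}(q,a) = \{\alpha : |\alpha - a/q| \le (N/Q)/N^k\}$ with $1 \le q \le Q$, $(a,q)=1$, and their complement $\mathfrak m$, where $Q=(\log N)^C$ is a suitable Hua/Vinogradov-admissible cutoff.

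On the major arcs, I would replace each $T(a/q+\eta,\,\xi_i)$ by its standard approximation obtained from partial summation plus the Siegel--Walfisz theorem: the sum over primes splits, modulo a small error, as a Gauss-type complete sum $g(a,q;a_i,q_i)$ in arithmetic progressions $\pmod{[q,q_i]}$, times a smooth oscillatory integral in $\eta$ localized by $\psi_{N/Q}(q\xi_i - a_i)$. Multiplying the $n$ factors and integrating $e(-\lambda\alpha)\,d\alpha$ over the major arcs produces exactly the displayed sum over $(\bfa,\bfq)$ with singular series $\singseries$, and the integral in $\eta$ re-assembles into the continuous spherical Fourier transform $\widetilde{d\sigma_\lambda}(\bm\xi - \bfa/\bfq)$ (after normalizing by $\lambda^{n/k-1}/P(\lambda)$ via Hua's asymptotic \eqref{Hua}). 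The completion of the $q$-sum to infinity, at the cost of an admissible tail bound on $\singseries$, yields the stated main term $\widehat{M_\lambda}(\bm\xi)$.

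The residual term $\widehat{E_\lambda}(\bm\xi)$ therefore consists of (i) the truncation error from the singular series tail, (ii) the error within each major arc coming from the prime-sum approximation, and (iii) the entire minor-arc contribution. Items (i)--(ii) are controlled by the Siegel--Walfisz theorem and convergence estimates for $\singseries$; both lose only powers of $\log N$, which are absorbed by enlarging $C=C(B)$. Item (iii) is the serious one: one needs Weyl-type bounds for $T(\alpha,\beta)$ when $\alpha$ is poorly approximable, in a form uniform in the linear frequency $\beta$. This is where the hypothesis that $n$ be large enough (at least $n_0(k)$) enters --- it guarantees that $n-2$ copies of Hua's inequality, combined with an $L^\infty$ bound on two copies of $T$ on the minor arcs, gives a gain of $(\log N)^{-B}$ over the trivial bound, after which trivial bounds on the remaining variables and Plancherel finish the estimate for $\widehat{E_\lambda}$. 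The main obstacle is thus establishing this minor-arc estimate uniformly in the auxiliary linear phases $\xi_i$, which requires the two-parameter Vinogradov/Vaughan estimate for $T(\alpha,\beta)$ rather than the classical one-parameter version.
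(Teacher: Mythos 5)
You are proposing a proof for a statement the paper itself does not prove: Theorem~\ref{thm:approximation_formula} is imported verbatim from \cite{ACHK} and used as a black box, so there is no ``paper's own proof'' to compare against here. That said, your circle-method sketch is, at the level of architecture, the correct argument from the cited source: represent $\widehat{\sigma_\lambda}(\bm\xi)$ as $\frac{1}{P(\lambda)}\int_0^1 \prod_i T(\alpha,\xi_i)\,e(-\lambda\alpha)\,d\alpha$ with the bilinear prime sum $T(\alpha,\beta)=\sum_{p\le N}(\log p)e(\alpha p^k+\beta p)$, dissect into major/minor arcs at scale $Q=(\log N)^C$, use Siegel--Walfisz on the major arcs to extract the local factors $g(a,q;a_i,q_i)$ and the continuous surface-measure transform $\widetilde{d\sigma_\lambda}$, insert smooth cutoffs $\psi_{N/Q}(\bfq\bm\xi-\bfa)$ near rational frequency vectors, and dump the minor arcs and all approximation/truncation errors into $\widehat{E_\lambda}$.

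Two caveats, neither fatal but both worth flagging. First, the frequency cutoff $\psi_{N/Q}(\bfq\bm\xi-\bfa)$ is not literally ``produced'' by the major-arc approximation of $T(a/q+\eta,\xi_i)$; it is inserted by hand in the Magyar--Stein--Wainger style, and one must separately check that doing so only changes the expression by an error absorbed into $\widehat{E_\lambda}$. Your phrasing conflates the dissection in $\alpha$ with this localization in $\bm\xi$. Second, and more substantively, the line ``$n-2$ copies of Hua's inequality combined with an $L^\infty$ bound on two copies'' oversimplifies where $n_0(k)$ comes from. For $k\le 4$ this is roughly right (hence $n_0(k)=2^k+1$), but for $k\ge 5$ the threshold
\[
n_0(k)=k^2+3-\max_{1\le j\le k-1}\left\lceil \frac{kj-\min(2^j,\,j^2+j)}{k-j+1}\right\rceil
\]
arises from optimizing over a family of mean-value estimates (Hua's lemma and Vinogradov-type bounds, applied in hybrid fashion over a parameter $j$), not from Hua alone, and the two-parameter minor-arc bound for $T(\alpha,\beta)$ uniform in the linear phase $\beta$ --- which you correctly single out as the crux --- is obtained via Vaughan's identity and Type~I/II estimates in \cite{ACHK}, not from a generic Weyl bound. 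These omissions do not change the shape of the argument, but they hide precisely where the hypothesis $n\ge n_0(k)$ is actually used.
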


We recall here that this Theorem was stated for slightly different values of $n$.  However, that statement included an $l^2(\Z^n)$ bound for a certain dyadic operator that we do not need here.  Therefore, Theorem \ref{thm:approximation_formula} as stated here, and as used later on, actually holds for the larger range $n \geq n_0(k)$.

Specifically, we prove the following quantitative $l^p(\Z^n)$-improving decay rate:

\begin{thm}
\label{maintheorem}
Let $n \geq n_0(k)$ and the spherical averages $A_\lambda$ be defined as above.  Then we have the following quantitative $l^p(\Z^n)$ improving inequality, with decay in $\lambda$:\\
For $k \geq 3$ and $1<p<2$
\begin{equation*}
   \|A_\lambda f\|_{l^{p'}(\Z^n)} \leq C_{B,p,n,k}\lambda^{(1-\frac{n}{k})(\frac{2}{p}-1)}(\log \lambda)^{-B}\|f\|_{l^p(\Z^n)}
\end{equation*}
and for $k=2$ and $1<p<2$
\begin{equation*}
   \|A_\lambda f\|_{l^{p'}(\Z^n)} \leq C_{p,n}\lambda^{(1-\frac{n}{2})(\frac{2}{p}-1)}(\log \lambda)^{n}\|f\|_{l^p(\Z^n)}
\end{equation*}
\end{thm}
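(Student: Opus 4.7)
My approach is Riesz--Thorin interpolation between $l^1\to l^\infty$ and $l^2\to l^2$ bounds, applied to the decomposition $\sigma_\lambda=M_\lambda+E_\lambda$ from Theorem~\ref{thm:approximation_formula}. The endpoint bounds for $A_\lambda$ itself are
\[
\|A_\lambda\|_{l^1\to l^\infty}=\|\sigma_\lambda\|_\infty\lesssim\lambda^{1-n/k}(\log\lambda)^n,\qquad\|A_\lambda\|_{l^2\to l^2}=\|\widehat{\sigma_\lambda}\|_\infty=1,
\]
the first because every prime coordinate is at most $\lambda^{1/k}$ and $P(\lambda)\asymp\lambda^{n/k-1}$, the second because $\sigma_\lambda$ is a probability measure. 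Riesz--Thorin already yields the correct power of $\lambda$; for $k=2$ the theorem follows immediately upon bounding $(\log\lambda)^{n(2/p-1)}\le(\log\lambda)^n$.

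For $k\ge 3$, the $(\log\lambda)^{-B}$ saving demands a refinement. Apply Theorem~\ref{thm:approximation_formula} with its parameter (which I rename $D$, reserving $B$ for the target) chosen large, and split $A_\lambda=M_\lambda+E_\lambda$. The Waring--Goldbach machinery behind the decomposition in \cite{ACHK} yields $\|\widehat{E_\lambda}\|_\infty\lesssim(\log\lambda)^{-D}$ for any prescribed $D$, so $\|E_\lambda\|_{l^2\to l^2}\lesssim(\log\lambda)^{-D}$; the triangle inequality gives $\|E_\lambda\|_{l^1\to l^\infty}\le\|\sigma_\lambda\|_\infty+\|M_\lambda\|_\infty\lesssim\lambda^{1-n/k}(\log\lambda)^n$. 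Riesz--Thorin then produces
\[
\|E_\lambda\|_{l^p\to l^{p'}}\lesssim\lambda^{(1-n/k)(2/p-1)}(\log\lambda)^{n(2/p-1)-2D(1-1/p)},
\]
and taking $D\ge(n(2-p)+Bp)/(2(p-1))$ drives the log exponent below $-B$; since the constant is allowed to depend on $p$ and $B$, this is permissible for every fixed $p\in(1,2)$.

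For the main term $M_\lambda$, the key observation is that $\|M_\lambda\|_\infty$ is smaller than $\|\sigma_\lambda\|_\infty$ by a full power of $\lambda$. The cutoffs $\psi_{N/Q}(\bfq\xi-\bfa)$ have pairwise disjoint supports of Lebesgue measure $\lesssim Q^n/(N^n\prod_iq_i)$, with $|\widetilde{d\sigma_\lambda}|\le 1$ and $\lambda^{n/k-1}/P(\lambda)=O(1)$; summing the $\prod_i\varphi(q_i)\le\prod_iq_i$ values of $\bfa$ and the $O(Q^n)$ values of $\bfq$, while bounding $|\singseries|=O(1)$, Fourier inversion gives
\[
\|M_\lambda\|_\infty\le\int_{\T^n}|\widehat{M_\lambda}(\xi)|\,d\xi\lesssim\frac{Q^{2n}}{N^n}\asymp\frac{(\log\lambda)^{O(1)}}{\lambda^{n/k}}.
\]
Interpolating against $\|\widehat{M_\lambda}\|_\infty=O(1)$ produces $\|M_\lambda\|_{l^p\to l^{p'}}\lesssim\lambda^{-(n/k)(2/p-1)}(\log\lambda)^{O(1)}$, beating the target by a factor $\lambda^{-(2/p-1)}$, which absorbs any polylog loss for $\lambda$ large. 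Summing the two contributions proves the theorem. The principal obstacle is the quantitative extraction $\|\widehat{E_\lambda}\|_\infty\lesssim(\log\lambda)^{-D}$ with explicit dependence on the parameter $C(D)$ of Theorem~\ref{thm:approximation_formula}, which requires tracing through the major/minor arc analysis in \cite{ACHK}.
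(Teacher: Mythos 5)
Your proposal follows the paper's high-level skeleton exactly: decompose $A_\lambda = M_\lambda + E_\lambda$ via Theorem~\ref{thm:approximation_formula}, interpolate $l^1\to l^\infty$ against $l^2\to l^2$ for each piece, and choose the free parameter in Theorem~\ref{thm:approximation_formula} large enough to overcome the $(\log\lambda)^n$ loss from the trivial bound. Your treatment of the error term, the choice of $D$, and the $k=2$ case are the same as in the paper. The genuine divergence is in the $l^1\to l^\infty$ bound for the main term. The paper proves $\|M_\lambda\|_{l^1\to l^\infty}\lesssim\lambda^{(2-n)/k}(\log N)^C$ by Fourier-inverting each slice $\widehat{K_\lambda^{a,q}}$, bounding the resulting Gauss-sum factor $G_x$ by $\prod_i q_i^{\eps}$, and then summing trivially over $a\in\mathbb U_q$, $q\le N$, which costs a factor $N^2=\lambda^{2/k}$. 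You instead bound $\|M_\lambda\|_{l^1\to l^\infty}\le\|\widehat{M_\lambda}\|_{L^1(\T^n)}$ and exploit the small, disjoint supports of the cutoffs $\psi_{N/Q}(\bfq\xi-\bfa)$ directly, keeping $\singseries$ intact rather than opening the $q$-sum; this yields the sharper $\lambda^{-n/k}(\log\lambda)^{O(1)}$. That is an improvement on the paper's estimate~\eqref{2} (and hence on Theorem~\ref{mainterm}), though for Theorem~\ref{maintheorem} itself both bounds suffice, since the error term dominates. The single nontrivial input your route requires, which you should state and attribute explicitly to \cite{ACHK}, is the uniform bound $|\singseries|\lesssim 1$ for $n\ge n_0(k)$: it follows from the Gauss-sum estimates behind Theorem~\ref{thm:approximation_formula} and is already implicitly used by the paper to get $\|\widehat{M_\lambda}\|_{L^\infty(\T^n)}\lesssim 1$, but your argument leans on it more heavily and it is precisely what lets you avoid the paper's $\lambda^{2/k}$ loss.
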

%Note that the exponent on the log has the interpolation factor 2-2/p in the first equation, but this is combined with the extra log(\lambda)^^(2/p-1) that comes from the other piece in the interpolation (and choosing B \geq n yields just the log^-B on simplifying).  Also, the 1/k from the log(\lambda^1/k) that would appear is absorbed into the constant. For the second equation, again there is a log\lambda^n(2-2/p) and a log\lambda^(C(B)(1+\varepsilon)(2/p-1).  Simply say log(\lambda)^n dominates (and if not, just do the trivial interpolation as indicated in the proof).  Note that the (1+\varepsilon) comes from the proof of the main term estimate, second to last line.

The power decay comes from the trivial bound; for $k\geq 3$ additionally we gain a logarithmic decay. No such gain is present for $k=2$ due to the fact that the main term is larger than the error term, see the remarks after the proof of Theorem \ref{maintheorem}.  This logarithmic decay is likely the best possible using these methods; additional decay might be possible due to increased knowledge about the distribution of primes and would likely require the resolution of deep problems in analytic number theory.

We can also say something similar for the discrete dyadic prime spherical maximal function: see Theorem \ref{dyadic}.  Both of these theorems follow by a straightforward interpolation argument; however, the averages for the main term of the decomposition, $M_\lambda$, satisfy a better quantitative improving bound for $k \geq 3$, whose proof requires a careful decomposition of the corresponding multiplier, which is where the main work of this paper lies.
\begin{thm}
\label{mainterm}
Let $n$ be as in Theorem \ref{maintheorem}.  Then the averaging operator $M_\lambda$ (see Theorem \ref{thm:approximation_formula}) satisfies the following improving estimate for $k \geq 3$ and $1<p<2$:
\begin{equation*}
   \|M_\lambda f\|_{l^{p'}(\Z^n)} \leq C_{p,n,k}\lambda^{(\frac{2-n}{k})(\frac{2}{p}-1)}(\log N)^{C(\frac{2}{p}-1)}\}\|f\|_{l^p(\Z^n)}.
\end{equation*}
\end{thm}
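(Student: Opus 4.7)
The plan is to prove Theorem~\ref{mainterm} by Riesz--Thorin interpolation between the endpoints $p=2$ and $p=1$. If one can establish $\|M_\lambda\|_{l^2 \to l^2} \lesssim 1$ and $\|M_\lambda\|_{l^1 \to l^\infty} \lesssim \lambda^{(2-n)/k}(\log N)^{C}$, then the Riesz--Thorin inequality
\[
\|M_\lambda\|_{l^p \to l^{p'}} \leq \|M_\lambda\|_{l^2 \to l^2}^{\,2-2/p}\cdot\|M_\lambda\|_{l^1 \to l^\infty}^{\,2/p-1}
\]
delivers the target bound for $1<p<2$.

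For the $l^2 \to l^2$ endpoint, Plancherel reduces matters to bounding $\|\widehat{M_\lambda}\|_{L^\infty(\T^n)}$. Since $Q = (\log N)^C$, for $N$ sufficiently large the arcs $\{\bm\xi \in \T^n : |\bfq\bm\xi - \bfa| \leq Q/N\}$ corresponding to distinct pairs $(\bfa,\bfq)$ with $\bfq \leq Q$ are pairwise disjoint, so it suffices to bound one piece of the multiplier. The trivial bounds $|\psi_{N/Q}| \leq 1$ and $|\widetilde{d\sigma_\lambda}(\eta)| \leq \widetilde{d\sigma_\lambda}(0)$, combined with the Hua asymptotic $P(\lambda) \sim \mathfrak S_{n,k}(\lambda)\lambda^{n/k-1}$ and the classical uniform bound $|\mathfrak S(\lambda;\bfa,\bfq)| \lesssim \mathfrak S_{n,k}(\lambda)$ valid in the range $n \geq n_0(k)$, immediately produce $\|M_\lambda\|_{l^2 \to l^2} \lesssim 1$.

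The bulk of the work is the $l^1 \to l^\infty$ bound, equivalently the $l^\infty(\Z^n)$-norm of the convolution kernel $K_{M_\lambda}$. Taking the inverse Fourier transform of each major-arc piece, setting $\bm\xi = \bfa/\bfq + \eta$, then changing variables $\eta = Q\zeta/(N\bfq)$ and invoking the scaling identity $\widetilde{d\sigma_\lambda}(\eta) = N^{n-k}\widetilde{d\sigma_1}(N\eta)$, present the $(\bfa,\bfq)$-contribution as the modulation $e(\bfa/\bfq \cdot \mathbf x)$ times a Euclidean integral pairing a Schwartz bump on scale $N\bfq/Q$ against the continuous $k$-sphere measure of radius $N$. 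Tracking the normalizations (the bump has $L^\infty$-norm $(Q/N)^n/\prod q_i$, while the surface measure contributes a mass on the $N\bfq/Q$-neighbourhood computable from its density $\sim N^{1-n}$), multiplying by $|\tfrac{\lambda^{n/k-1}}{P(\lambda)}\mathfrak S(\lambda;\bfa,\bfq)|$, and summing over $\bfq \leq Q$, $\bfa \in \mathbb U_\bfq$ via the absolute convergence $\sum_{\bfq,\bfa}|\mathfrak S(\lambda;\bfa,\bfq)|/\prod_i q_i \lesssim \mathfrak S_{n,k}(\lambda)$, should give the desired pointwise bound $\|K_{M_\lambda}\|_{l^\infty} \lesssim \lambda^{(2-n)/k}(\log N)^C$.

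The main obstacle is precisely this kernel estimate. The naive $L^\infty$-bound on the oscillatory factors only yields a decay of order $\lambda^{-1}$, which is too weak once $n > k+2$. Extracting the full factor $\lambda^{(2-n)/k} = N^{2-n}$ requires combining the Schwartz decay of $\psi$, the stationary-phase oscillation of $\widetilde{d\sigma_1}$ at frequencies $|\eta| \gtrsim 1/N$, and the coherent cancellation in the Ramanujan-type arithmetic sum $\sum_{\bfa \in \mathbb U_\bfq} \mathfrak S(\lambda;\bfa,\bfq)\, e(\bfa\cdot\mathbf x/\bfq)$. This is the \emph{careful decomposition of the multiplier} highlighted in the introduction, and is where the principal technical contribution of the paper is concentrated.
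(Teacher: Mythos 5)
Your interpolation framework is exactly right and matches the paper: $\|M_\lambda\|_{l^p\to l^{p'}}$ follows by Riesz--Thorin between an $l^2\to l^2$ bound and an $l^1\to l^\infty$ kernel bound, and your $l^2$ endpoint argument (multiplier $L^\infty$ bound via disjointness of major arcs plus the Hua asymptotic) is essentially what the paper does, with the minor difference that the paper instead derives $\|M_\lambda\|_{l^2\to l^2}\lesssim 1$ from $M_\lambda = A_\lambda - E_\lambda$.

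The gap is the $l^1\to l^\infty$ kernel estimate, which you correctly identify as the crux but do not actually prove, and your sketch of it does not match the actual mechanism. First, your numerology is inconsistent: if your claimed absolute-convergence bound $\sum_{\bfq\leq Q,\bfa}|\mathfrak S(\lambda;\bfa,\bfq)|/\prod_i q_i\lesssim \mathfrak S_{n,k}(\lambda)$ were correct, then combining it with your $(Q/N)^n/\prod q_i$ estimate for the convolution $\widetilde\psi\star d\sigma_\lambda$ would yield $\|K_{M_\lambda}\|_{l^\infty}\lesssim N^{-n}(\log N)^{Cn}$, which is \emph{stronger} than the target $\lambda^{(2-n)/k}(\log N)^C = N^{2-n}(\log N)^C$ — so either the convergence claim is false or you would be outdoing the paper. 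In fact the paper's exponent $2-n$ decomposes as $2 + (-n)$: the $N^{-n}$ comes from the $L^\infty$-norm of $\widetilde{\psi_{N/Q,\bfq}}\star d\sigma_\lambda$ (a direct Schwartz-tail/surface-measure estimate, not stationary phase), while the extra $N^{2}$ comes from a \emph{trivial} summation $\sum_{q\leq N}\sum_{a\in\mathbb U_q}1\lesssim N^2$ of the outer $(a,q)$-variables, after the singular series $\mathfrak S(\lambda;\bfa,\bfq)$ has been unfolded. Second, the Ramanujan-type cancellation is not applied to $\sum_\bfa\mathfrak S(\lambda;\bfa,\bfq)e(\bfa\cdot\mathbf x/\bfq)$ as a whole; it is applied, at fixed $(a,q)$, to the factored Gauss component
\[
G_x(a,q,\bfq)=\prod_{i=1}^n\sum_{a_i\in\mathbb U_{q_i}}g(a,q;a_i,q_i)\,e(-a_i x_i/q_i),
\]
which the paper bounds by $\prod_i q_i^\eps$ via a lemma reducing to divisor, totient, and sum-of-divisor estimates ($\tau(m)\lesssim m^\eps$, $\varphi(m)\gtrsim m^{1-\eps}$, $\sigma(m)\lesssim m\log\log m$). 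This bound is uniform in $(a,q)$ — there is no further decay in $q$ to exploit, which is why the trivial $N^2$ factor is unavoidable in the paper's argument. Your proposal omits both the extraction of $G_x$ (the phase-shuffling step that pulls the $e(a_ix_i/q_i)$ factors out of the Fourier inversion integral) and the divisor-function lemma controlling it; without those, the kernel estimate is not established.
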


The main term operator contains both the arithmetic and analytic content of the spherical averages, see e.g. \cite{ACHK}, \cite{MSW} for a discussion of this.  Also, the maximal function of the main term satisfy $l^p$ bounds for all $p> \frac{n}{n-2}$, independent of the degree $k$ (as long as the degree is large enough).  Proving this non-trivial $l^p$-improving of these main term operators may lead to insight on how to improve the error term in the decomposition, thus leading to improved $l^p$ bounds for the spherical maximal operator.  Additionally, knowledge of $l^p$-improving for the main term will help to compare on a structural level the similarities and differences between the prime and integer spherical maximal functions; for example, might the optimal dependence on $\lambda$ be independent of the degree $k$, as in the $l^p \to l^p$ bound?  Finally, better knowledge of quantitative $l^p$ improving estimates (for the main term or the entire operator) would likely lead to better quantitative sparse bounds, recently pursued in the integer setting \cite{K2}, but not yet for this prime variant. 

Most of the next and final section of this paper is devoted to proving Theorem \ref{mainterm}.  We show a certain decay rate for the main term $M_\lambda$ of the operator, which requires a careful analysis of its corresponding multiplier.  This immediately leads to Theorem \ref{mainterm}.

We use a boldface script to denote multidimensional vectors in dimension $n$, where the underlying space they belong to ($\R^n$, $\Z^n$, or $\T^n$) will be clear from the context.  Moreover the notation $A\lesssim B$ will be used to mean $ A \leq CB$ where $C$ is a constant that may depend on $p$, $n$, or $k$, but never on $\lambda$.
The notation used in this paper will be introduced as needed.  The next section contains all the proofs and a brief discussion.

\subsection{Acknowledgements}
The author was supported by NSF grant DMS-1502464.  She would like to thank Kevin Hughes for suggesting the extension to the dyadic maximal function, and the anonymous referee for expository suggestions and for catching an error in the original manuscript.

\section{Proofs and discussion}
  Theorem \ref{maintheorem} is proved by interpolating the following estimates:
\begin{equation}
\label{1}
    \|A_\lambda f\|_{l^\infty(\Z^n)} \lesssim \lambda^{1-\frac{n}{k}}(\log{\lambda}^{1/k})^n\|f\|_{l^1(\Z^n)}
\end{equation}
\begin{equation}
\label{2}
     \|M_\lambda f\|_{l^\infty(\Z^n)} \lesssim \lambda^{\frac{2-n}{k}}(\log{N})^C\|f\|_{l^1(\Z^n)}
\end{equation}
\begin{equation}
\label{3}
     \|A_\lambda f\|_{l^2(\Z^n)} \lesssim \|f\|_{l^2(\Z^n)}
\end{equation}
\begin{equation}
\label{4}
     \|E_\lambda f\|_{l^2(\Z^n)} \lesssim (\log{N})^{-B}\|f\|_{l^2(\Z^n)}
\end{equation}

Given these estimates, we prove Theorem \ref{maintheorem}:
\begin{proof}[Proof of Theorem \ref{maintheorem}]
Let $k \geq 3$ and $1 \leq p \leq 2$.  First, \eqref{1} gives
\[
     \|E_\lambda f\|_{l^\infty(\Z^n)} \lesssim \lambda^{1-\frac{n}{k}}(\log{\lambda}^{1/k})^n\|f\|_{l^1(\Z^n)}
\]
since $\frac{2-n}{k} < 1-\frac{n}{k}<0$.  Interpolating this with $ \|M_\lambda f\|_{l^2(\Z^n)} \lesssim \|f\|_{l^2(\Z^n)}$ derived from \eqref{4} we get
\[
     \|E_\lambda f\|_{l^{p'}(\Z^n)} \lesssim \lambda^{(1-\frac{n}{k})(\frac{2}{p}-1)}(\log{\lambda}^{1/k})^{n(\frac{2}{p}-1)}(\log N)^{-B(2-\frac{2}{p})}\|f\|_{l^p(\Z^n)}.
\]
Note that we can chose $B \geq n$ which simplifies the expression.
For the main term, interpolating \eqref{2} and \eqref{3} gives
\[
     \|M_\lambda f\|_{l^{p'}(\Z^n)} \lesssim \lambda^{(\frac{2-n}{k})(\frac{2}{p}-1)}(\log N)^{C(\frac{2}{p}-1)}\|f\|_{l^p(\Z^n)}.
\]
Putting these together, we get
\[
     \|A_\lambda f\|_{l^{p'}(\Z^n)} \lesssim max\{ \lambda^{(1-\frac{n}{k})(\frac{2}{p}-1)}(\log N)^{-B} , \lambda^{(\frac{2-n}{k})(\frac{2}{p}-1)}(\log N)^{C(\frac{2}{p}-1)}\}\|f\|_{l^p(\Z^n)}
\]

which gives Theorem \ref{maintheorem}.

For $k=2$, we can do better by simply interpolating \eqref{1} and \eqref{3}, which gives the trivial interpolation bound of 
\[
\|A_\lambda f\|_{l^{p'}(\Z^n)} \lesssim \lambda ^{(1-\frac{n}{k})(\frac{2}{p}-1)}(\log \lambda)^{n}\|f\|_{l^p(\Z^n)}
\]
\end{proof}

\begin{proof}[Proof of Theorem \ref{mainterm}]
Note that the second term in the maximum that appears in the last inequality of the previous proof comes from the main term operator $M_{\lambda}$.  Therefore, once estimates \eqref{2} is proved, we get Theorem \ref{mainterm}.
\end{proof}

\begin{remark}
The decay rate of $\lambda$ improves as $p$ approaches 1.  At $p=2$, we get no improvement.  This makes sense since we expect better decay at lower values of $p$ since improving is ``trivial" in this discrete setting.
\end{remark}

\begin{remark}
Note that these estimates hold for $n\geq 5$ if $k=2$.  This is in contrast to some of the results in \cite{ACHK} which only hold for $n \geq 7$.   This might provide further evidence that the spherical maximal function along the primes might be bounded for all $n\geq 5$.
\end{remark}

\begin{remark}
For $k \geq 3$, we have that $\frac{2-n}{k} < 1-\frac{n}{k}$.  This is due to the fact that the decay from the main term is greater for $p<2$, as is expected.  Any improvements to the estimate \eqref{4} would automatically improve Theorem \ref{maintheorem}.  For the case $k=2$, this decomposition from Theorem \ref{thm:approximation_formula}  gives no improvement to the interpolation between the easy estimates \eqref{1} and \eqref{3}, because in this case the error term actually has better decay.  
\end{remark}

\begin{remark}
The decay in Theorem \ref{mainterm} is likely not optimal.  To discuss optimality, we will likely have to restrict to a smaller range of $p$, such as in \cite{KL}.  Any improvements to estimate \eqref{2} would directly improve this decay rate.  It seems a plausible conjecture that once we restrict to a certain range of $p$, the optimal decay rate is $   \|M_\lambda f\|_{l^{p'}(\Z^n)} \lesssim \lambda^{-\frac{n}{k}(\frac{2}{p}-1)}(\log \lambda)^{C(\frac{2}{p}-1)}\|f\|_{l^p(\Z^n)}$.  However, the corresponding conjecture in the integer case is not fully known in terms of the range of $p$.  Any improvements to estimate \eqref{2} would directly improve this decay rate in our case, but it is likely that improvements to Theorem \ref{mainterm} will come from other techniques. 
\end{remark}

We now prove \eqref{1} through \eqref{4}.
We use the trivial estimate to get \eqref{1}: due to the Hua asymptotic for $P(\lambda)$, we have
\[
|A_\lambda f(\mathbf l)| \leq  \frac{1}{\lambda^{\frac{n}{k}-1}} \sum_{|{\bf p}|^k=\lambda} (\log{\bf p}) f({\bf p-l}), 
\]
which can be bounded trivially in $l^\infty(\Z^n)$ norm by $\lambda^{1-\frac{n}{k}}(\log{\lambda}^{1/k})^n\|f\|_{l^1(\Z^n)}$.  Also, 
\eqref{3} is also easily seen to be true since $\sigma_\lambda$ is defined to be a probability measure.

We now describe how to get \eqref{4}.  Note that from \cite{ACHK} we have that 
\begin{equation}
\left\| \widehat{E_\lambda} \right\|_{L^\infty(\mathbb T^n)} \lesssim (\log \la)^{-B}. 
\end{equation}
and moreover, that this actually holds for a larger range, including all $n \geq 5$ when $k=2$.  With this in mind, using Plancherel's theorem and properties of the Fourier transform, we have
\[
\|E_\lambda f\|_{l^2(\Z^n)}
 = \|\widehat{E_\lambda f}\|_{l^2(\Z^n)} = \|\widehat{E_\lambda}\widehat{f}\|_{l^2(\Z^n)} \leq \|\widehat{E_\lambda}\|_{L^\infty(\T^n)}\|f\|_{l^2(\Z^n)} \lesssim 
 (\log \la)^{-B}\|f\|_{l^2(\Z^n)}
\]
which gives \eqref{4} as claimed.

The remainder of the paper is devoted to proving \eqref{2}.  We follow the method outlined in \cite{Hughes} with the technology and notation from \cite{ACHK}; we prove a favorable $L^\infty$ estimate for the kernel of the main term operator, which is the inverse Fourier transform of $\widehat{M_\lambda}$.  This is because
\[
\|M_\lambda f\|_{l^\infty(\Z^n)} = \|K_\lambda \star f\|_{l^\infty(\Z^n)} \lesssim \|K_\lambda \|_{l^\infty(\Z^n)}\|f\|_{l^1(\Z^n)}
\]
by Young's inequality (where $K_\lambda$ is the kernel of the convolution operator), so if we can show

\begin{equation}
\label{kernel decay}
    \|K_\lambda \|_{l^\infty(\Z^n)} \lesssim \lambda^{\frac{2-n}{k}}(\log N)^C
\end{equation}
then we will get \eqref{2}.

Recall that we can maneuver the sums in the variable $a$ and $q$ as well as the multidimensional sums in $\bfa$ and $\bfq$, and note that even though the sum in $q$ in \cite{ACHK} extends to $\infty$, this was chosen purely for convenience, and that this sum needs only to be taken to $N$.  
%See our achk paper, use lemma 7 but only for dyadic bound, don't need this and instead do an l infinity replacement, minor in t case with major in xi handled the same as major major, as in previous works like MSW, so this covers everything
With this in mind, denote 
\[
G_\lambda (\bfa, \bfq) = \prod_{i=1}^\dimension g(a, q; a_i, q_i)
\]
 and 
 \[
\psi_{N/Q, \bfq}(\bm\xi-\bfa/\bfq) =  \psi_{N/Q}(\bfq\bm\xi-\bfa)
\]
so that 
\[\widehat{K_\lambda} := \widehat{M_\lambda}(\bm \xi) = \sum_{q=1}^N \sum_{a\in \mathbb U_q} \widehat{K_\lambda^{a,q}}(\bm \xi), \]
where 
\[ \widehat{K_\lambda^{a,q}}(\bm\xi) :=   e\left( -\lambda a/q \right)\sum_{\bfq \leq Q}\sum_{\mathbf a \in \mathbb U_{\bfq}} G_\lambda (\bfa, \bfq) \psi_{N/Q, \bfq}(\bm\xi-\bfa/\bfq) \widetilde{d \sigma_{\lambda}}(\bm\xi-\mathbf a/\mathbf q). \]

%We will first prove the following estimate:
%\begin{equation}
  %  \label{kernel estimate}
 %   K_\lambda^{a,q}({\bf x}) = e(-\lambda \cdot a/q)\prod_{i=1}^ne(ax_i^k/q)\cdot q_i \widetilde{\Psi_{N/Q,\bfq}}\star %d\sigma_{\lambda}({\bf x}).
%\end{equation}
 To show \eqref{kernel decay}, we start by applying Fourier inversion
 \[
   K_\lambda^{a,q}({\bf x}) = e(-\lambda \cdot a/q)\int_{\T^n}e(- {\bf x}\cdot \bm\xi)\sum_{ 1 \le \bfq \le Q}\sum_{\bfa \in \mathbb U_{\bfq}} G_\lambda (\bfa ,\bfq)\psi_{N/Q, \bfq}(\bm\xi-\bfa/\bfq) \widetilde{d\sigma_{\lambda}}(\bm\xi-\bfa/\bfq) 
   \]
   and noting that for each fixed $\xi$, the supports of the $\psi_{N/Q}(\bfq\bm\xi-\bfa)$ are disjoint in $\bfq$ (for $\bfq \leq Q$), we get
   \[
      K_\lambda^{a,q}({\bf x}) = e(-\lambda \cdot a/q)\int_{\T^n}e(- {\bf x}\cdot \bm\xi)\sum_{\bfa \in \mathbb U_{\bfq}} G_\lambda(\bfa , \bfq)\psi_{N/Q, \bfq}(\bm\xi-\bfa/\bfq) \widetilde{d\sigma_{\lambda}}(\bm\xi-\bfa/\bfq).
    \]
  Writing out the Gauss sum along with multiplying and dividing by $e(\frac{a_1x_i}{q_i})$, the integral becomes
    \[
\int_{\T^n}e(- {\bf x}\cdot \bm\xi)\sum_{\bfa \in \mathbb U_{\bfq}} \prod_{i=1}^\dimension \frac 1{\varphi([q,q_i])} \sum_{b \in \mathbb U_{[q,q_i]}} e\bigg( \frac {ab^k}q + \frac {a_ib}{q_i} \bigg)e(\frac{a_1x_i}{q_i})e(\frac{-a_1x_i}{q_i})\psi_{N\/Q, \bfq}(\bm\xi-\bfa/\bfq) \widetilde{d\sigma_{\lambda}}(\bm\xi-\bfa/\bfq).
\]
Since we are integrating over $\bm\xi$, due to the cutoff function $\psi$ we can extend the integration to $\R^n$.  We therefore get 
\[
    K_\lambda^{a,q}({\bf x}) = e(-\lambda \cdot a/q)
G_x(a,q,\bfq)\int_{\R^n}e(-{\bf x}(\bm\xi - \bfa/\bfq))\psi_{N/Q, \bfq}(\bm\xi-\bfa/\bfq) \widetilde{d\sigma_{\lambda}}(\bm\xi-\bfa/\bfq) 
\]

where 
\begin{equation}
\label{Gausscomponent}
   G_x(a,q,\bfq) = \sum_{\bfa \in \mathbb U_{\bfq}} \prod_{i=1}^\dimension \frac 1{\varphi([q,q_i])} \sum_{b \in \mathbb U_{[q,q_i]}} e\bigg( \frac {ab^k}q + \frac {a_ib}{q_i} \bigg)e\bigg(-\frac{a_1x_i}{q_i}\bigg) 
\end{equation}

since we have
\[
e(-\bm x\cdot \bm\xi)\prod_{i=1}^n e(a_ix_i/q_i) = e(-\sum_{i=1}^n x_i(\xi_i-\frac{a_i}{q_i})) = e(-\bm x(\bm\xi-\bfa/\bfq)).
\]
Putting this all together, we get
\[
 K_\lambda^{a,q}({\bf x}) = e(-\lambda \cdot a/q)
G_x(a,q,\bfq)
\mathcal{F}(\psi_{N/Q, \bfq}(\bm\xi-\bfa/\bfq) \widetilde{d\sigma_{\lambda}}(\bm\xi-\bfa/\bfq))
\]
where $\mathcal{F}$ indicates the Fourier transform. 

Using properties of the Fourier transform, we can rewrite
\[
\mathcal{F}(\psi_{N/Q, \bfq}(\bm\xi-\bfa/\bfq) \widetilde{d\sigma_{\lambda}}(\bm\xi-\bfa/\bfq)) = 
\widetilde{\psi_{N/Q, \bfq}}\star d\sigma_{\lambda}(\bm x).
\]
to get 
\begin{equation}
\label{Kerneldecomposition}
K_\lambda^{a,q}({\bf x}) = e(-\lambda \cdot a/q)G_x(a,q,\bfq)\widetilde{\psi_{N/Q, \bfq}}\star d\sigma_{\lambda}(\bm x)
\end{equation}
so that 
\[
 |K_\lambda^{a,q}({\bf x})| =
|G_x(a,q,\bfq)|
|\widetilde{\psi_{N/Q, \bfq}}\star d\sigma_{\lambda}(\bm x)|.
\]
Next we will bound both the Gauss component \eqref{Gausscomponent} and the convolution in absolute values.
We can rewrite \eqref{Gausscomponent} as
\[
G_x(a,q,\bfq) =  \prod_{i=1}^\dimension\sum_{a_i\in U_{q_i}}\frac 1{\varphi([q,q_i])} \sum_{b \in \mathbb U_{[q,q_i]}} e\bigg( \frac {ab^k}q + \frac {a_ib}{q_i} \bigg)e\bigg(\frac{-a_1x_i}{q_i}\bigg).
\]
\[
=  \prod_{i=1}^\dimension\sum_{a_i\in U_{q_i}}g(a, q; a_i, q_i)e\bigg(\frac{-a_1x_i}{q_i}\bigg)
\]
(To see this note that if $a_i\in U_{q_i}$ then there exists a $n_i$ such that $n_ia_i = 1$, so $\lcm_i(n_i)a_i = 1$, so $\bfa \in U_\bfq$.  Conversely, $\bfa\in U_\bfq$ implies $a_i\in U_{q_i}$ for all $i$.)

%Looking at this Fourier transform and the fact that
%\[
%\mathcal{F}(\psi_{N\bfq/Q}(\bm\xi-\bfa/\bfq) ) = \frac{1}{N^n}\widetilde{\psi_{\bfq/Q}}(x/N)
%\]
%and 
%\[
%\mathcal{F}(\widetilde{d\sigma_{\lambda_0}}_{,N}(\bm\xi-\bfa/\bfq)) = \frac{1}{N}d\sigma_{\lambda_0}(x/N)
%\]
%(** add in needed calcs from alt 1 about psi and d omega)

%we ca rewrite this part as
We have the following bound:
\begin{lemma}
For all $\varepsilon >0$,
\[
 |\sum_{a_i\in U_{q_i}}g(a, q; a_i, q_i)e\bigg(\frac{-a_1x_i}{q_i}\bigg)| \lesssim q_i^{\varepsilon}
 \]
\end{lemma}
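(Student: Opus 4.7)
The plan is to open up the definition of $g(a,q;a_i,q_i)$ and interchange the sum over $a_i \in \mathbb U_{q_i}$ with the inner sum over $b \in \mathbb U_{[q,q_i]}$ that appears inside $g$. The resulting inner sum over $a_i$ of $e(a_i(b-x_i)/q_i)$ is precisely the Ramanujan sum $c_{q_i}(b-x_i) := \sum_{a_i \in \mathbb U_{q_i}} e(a_i(b-x_i)/q_i)$, so one arrives at
\[
S := \sum_{a_i \in \mathbb U_{q_i}} g(a,q;a_i,q_i)\, e\!\left(\frac{-a_i x_i}{q_i}\right) = \frac{1}{\varphi([q,q_i])} \sum_{b \in \mathbb U_{[q,q_i]}} e\!\left(\frac{a b^k}{q}\right) c_{q_i}(b-x_i).
\]

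Next, I would factor $S$ over prime powers via the Chinese Remainder Theorem. Since Ramanujan sums are multiplicative and the fraction $a/q$ splits locally at each prime, one obtains $S = \prod_{p \mid D} L_p$, with $D := [q,q_i]$ and each $L_p$ a local sum on $\mathbb U_{p^\gamma}$, where $\gamma = \max(v_p(q), v_p(q_i))$ and $v_p$ denotes the $p$-adic valuation. Three cases then appear, depending on whether $p$ divides $q$, $q_i$, or both. If $p \mid q$ but $p \nmid q_i$, the Ramanujan factor degenerates to $1$ and the triangle inequality alone gives $|L_p| \leq 1$. If $p \mid q_i$ but $p \nmid q$, the exponential is trivial and a direct evaluation of $c_{p^{v_p(q_i)}}$ (which vanishes unless $p^{v_p(q_i)-1} \mid b-x_i$) shows $|L_p| \leq 1$. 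Finally, if $p \mid \gcd(q,q_i)$, combining the Ramanujan bound $|c_{p^\beta}(m)| \leq \gcd(p^\beta, m)$ with a partition of the summation range by $v_p(b-x_i)$ yields $|L_p| \leq v_p(q_i) + 2$.

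Assembling these local bounds, $|S| \leq \prod_{p \mid q_i}(v_p(q_i)+2)$, a multiplicative arithmetic function dominated by a power of the divisor function of $q_i$, hence $\lesssim q_i^\varepsilon$ by the standard divisor bound. The main technical obstacle is the third case above: a naive global application of $|c_{q_i}(m)| \leq \gcd(q_i,m)$ directly inside $S$ would only yield a bound of order $[q,q_i]^\varepsilon$, possibly as large as $(qq_i)^\varepsilon$, which is far too weak in the regime $q \lesssim N$ that matters for the kernel estimate. It is precisely the CRT factorization, together with the observation that the Ramanujan-bounded local factors involve only primes dividing $q_i$, that isolates the $q_i$-dependence and delivers the claimed bound.
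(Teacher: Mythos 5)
Your proposal is correct, and it takes a genuinely different route from the paper's own argument. The paper disposes of this lemma almost entirely by citation: it quotes the intermediate estimate
\[
\Bigl|\sum_{a_i\in \mathbb U_{q_i}}g(a,q;a_i,q_i)\,e(-a_ix_i/q_i)\Bigr| \leq \frac{\tau((q,q_i))}{\varphi\bigl(q_i/(q,q_i)\bigr)}\,\sigma\bigl(q_i/(q,q_i)\bigr)
\]
from the proof of Lemma~6(iii) in \cite{ACHK}, and then applies the standard bounds $\tau(n)\lesssim n^{\varepsilon'}$, $\varphi(n)\gtrsim n^{1-\varepsilon''}$, $\sigma(n)\lesssim n\log\log n$. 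You instead reprove the needed cancellation from scratch: interchanging the $a_i$- and $b$-sums to expose the Ramanujan sum $c_{q_i}(b-x_i)$, factoring over primes $p\mid[q,q_i]$ by CRT, and bounding each local factor $L_p$ by cases ($p\mid q$ only, $p\mid q_i$ only, $p\mid(q,q_i)$, with the Ramanujan-sum estimate $|c_{p^\beta}(m)|\le\gcd(p^\beta,m)$ yielding $|L_p|\le v_p(q_i)+2$ in the last case). This is almost certainly the same underlying mechanism that produces the cited inequality in \cite{ACHK}, so the two arguments are morally equivalent; what your version buys is self-containedness and a cleaner multiplicative bound $\prod_{p\mid q_i}(v_p(q_i)+2)$, at the cost of redoing a computation the paper outsources. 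One small quibble: your closing remark that a naive global use of $|c_{q_i}(m)|\le\gcd(q_i,m)$ would cost a factor as large as $[q,q_i]^\varepsilon$ overstates the damage -- that route gives roughly $\tau(q_i)\cdot[q,q_i]/\varphi([q,q_i])\lesssim q_i^\varepsilon\log\log(qq_i)$, which would still suffice for the application (the final estimate already carries a $(\log N)^C$ factor). The CRT factorization is cleaner and gives the stated $q_i^\varepsilon$, but it is not strictly necessary for the kernel bound; this affects only your motivation, not the correctness of the proof.
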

\begin{proof}
From the proof of Lemma 6, part (iii) in \cite{ACHK}, we have that 
\[
|\sum_{a_i\in U_{q_i}}g(a, q; a_i, q_i)e\bigg(\frac{-a_1x_i}{q_i}\bigg)| \leq \frac{\tau((q,q_i))}{\varphi(q_i/(q,q_i))}\sum_{d|\frac{q_i}{(q,q_i)}}d = \frac{\tau((q,q_i))}{\varphi(q_i/(q,q_i))}\sigma(q_i/(q,q_i))
\]
Now recall the following facts:
\begin{itemize}
    \item $\tau(n) \lesssim n^{\varepsilon '}$
    \item $n^{1-\varepsilon ''} \leq \varphi(n) \leq n$
    \item $\sigma(n) \leq e^\gamma n\log\log n$
\end{itemize}
for all $\varepsilon', \varepsilon'' >0$.  Here $\tau(n) = \sum_{d|n}1$ and $\sigma(n) = \sum_{d|n}d$, while $\gamma$ is the Euler-Mascheroni constant.

Using these facts, we get the bound
\[
e^\gamma (q,q_i)^{\varepsilon '} \bigg(\frac{q_i}{(q,q_i)}\bigg)^{\varepsilon ''}\log\log \bigg(\frac{q_i}{(q,q_i)}\bigg) \lesssim q_i^{\varepsilon}
\]
\end{proof}

Therefore
\[
|G_x(a,q,\bfq)| \leq \prod_{i=1}^\dimension q_i^\varepsilon
\]

To bound the convolution in \eqref{Kerneldecomposition}, we use a variant of the well-known decay of the spherical measure (where we have the diagonal transformation $\bfq$), we have that 
\[
\widetilde{\psi_{N/Q, \bfq}}\star d\sigma_{\lambda}(\bm x) \lesssim \frac{QN^{-n}}{\bfq(1+\frac{|\bm x|}{N\bfq})^M}
\]
where $M>0$ is any natural number and the implicit constant depends on $M$ (see, for example, equation (5.5.12) in \cite{grafakos08a} - this also holds for degree $k$ spheres).

Now we show \eqref{kernel decay}.   Trivially summing over $a\in U_q$, we have
\begin{equation}
\label{sum in a}
|\sum_{a\in U_q}K_\lambda^{a,q}(\bm x)| \leq q|K_\lambda^{a,q}(\bm x)| \leq q \prod_{i=1}^\dimension q_i^\varepsilon|\widetilde{\psi_{N\/Q, \bfq}}\star d\sigma_{\lambda}(\bm x)| \lesssim qQ^{1+\varepsilon}N^{-n}
\end{equation}

and we have (by technically abusing notation in that the constant $C$ below should be $C' = C(1+\varepsilon)$),
\[
|\sum_{q=1}^N\sum_{a\in U_q}K_\lambda^{a,q}(\bm x)| \lesssim N^2Q^{1+\varepsilon}N^{-n} = \lambda ^{\frac{2-n}{k}}Q^{1+\varepsilon} = \lambda ^{\frac{2-n}{k}}(\log N)^C
\]
which is \eqref{kernel decay}.

Finally, we indicate how to prove the following maximal dyadic version of Theorem \ref{maintheorem}:

\begin{thm}
\label{dyadic}
Let $n \geq n_1(k)$ (see \cite{ACHK} for a precise definition).  Then we have for $k \geq 3$,, $1<p<2$
\begin{equation*}
   \left\|\sup_{\Lambda \leq \lambda < 2\Lambda}|A_\lambda f|\right\|_{l^{p'}(\Z^n)} \lesssim C_{B,p,n,k}\Lambda^{(1-\frac{n}{k})(\frac{2}{p}-1)}(\log \Lambda)^{-B}\|f\|_{l^p(\Z^n)}
\end{equation*}
and for $k=2$ and $1<p<2$
\begin{equation*}
   \|A_\lambda f\|_{l^{p'}(\Z^n)} \leq C_{p,n}\Lambda^{(1-\frac{n}{2})(\frac{2}{p}-1)}(\log \Lambda)^{n}\|f\|_{l^p(\Z^n)}
\end{equation*}
\end{thm}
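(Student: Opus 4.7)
The plan is to follow the same interpolation scheme used to prove Theorem \ref{maintheorem}, replacing each of the estimates \eqref{1}--\eqref{4} by its dyadic maximal analogue on $[\Lambda, 2\Lambda)$. Since a supremum preserves $l^\infty$ bounds and since the error/main-term decompositions are linear in $\lambda$, most of the work consists in establishing a dyadic $l^2$ estimate that loses only a logarithmic factor compared to the fixed-$\lambda$ bound.

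First, the $l^1 \to l^\infty$ bound is immediate: since $n \geq n_1(k)$ forces $1 - n/k < 0$, the map $\lambda \mapsto \lambda^{1-n/k}$ is decreasing, so the pointwise inequality used to prove \eqref{1} yields
\[ \Bigl\| \sup_{\Lambda \leq \lambda < 2\Lambda} |A_\lambda f| \Bigr\|_{l^\infty(\Z^n)} \lesssim \Lambda^{1-n/k}(\log \Lambda)^n \|f\|_{l^1(\Z^n)}, \]
and the analogous dyadic version of \eqref{2} for $M_\lambda$ follows by the same monotonicity.

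Second, I would invoke from \cite{ACHK} the dyadic maximal estimate $\bigl\|\sup_{\Lambda \leq \lambda < 2\Lambda}|A_\lambda f|\bigr\|_{l^2(\Z^n)} \lesssim \|f\|_{l^2(\Z^n)}$; this is precisely the ingredient that enforces the enlarged hypothesis $n \geq n_1(k)$, as opposed to the range $n \geq n_0(k)$ sufficient for Theorem \ref{maintheorem}. Interpolating this with the $l^\infty$ bound from the previous paragraph directly produces the second conclusion of Theorem \ref{dyadic} (the case $k=2$), and already gives a usable, though not yet optimal, estimate for $k \geq 3$.

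For the improved $k \geq 3$ conclusion the essential additional ingredient is a dyadic maximal version of \eqref{4}, namely
\[ \Bigl\| \sup_{\Lambda \leq \lambda < 2\Lambda} |E_\lambda f| \Bigr\|_{l^2(\Z^n)} \lesssim (\log \Lambda)^{-B} \|f\|_{l^2(\Z^n)}. \]
This is the main technical obstacle: the number of admissible $\lambda \in \Gamma_{n,k} \cap [\Lambda, 2\Lambda)$ is of order $\Lambda$, so a trivial summation of the individual $(\log \lambda)^{-B}$ bounds is useless. The standard remedy is a discrete Sobolev/square-function step: writing $\sup_\lambda |E_\lambda f|^2 \leq |E_\Lambda f|^2 + 2\sum_\lambda |E_{\lambda+1}f - E_\lambda f|\cdot|E_\lambda f|$, applying Cauchy--Schwarz, and estimating each square function by Plancherel together with the pointwise multiplier bound $\|\widehat{E_\lambda}\|_{L^\infty(\T^n)} \lesssim (\log \lambda)^{-B}$ and its smoothness in $\lambda$. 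The polynomial loss from summing over $\sim \Lambda$ scales is absorbed by enlarging $B$, since the hypothesis permits any fixed $B$. This reduction is implicit in the argument for the maximal function bound in \cite{ACHK}, so it can simply be cited. Once all four dyadic maximal ingredients are in place, the interpolation scheme of Theorem \ref{maintheorem} carries over verbatim with $\lambda$ replaced by $\Lambda$, producing both conclusions of Theorem \ref{dyadic}.
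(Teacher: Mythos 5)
Your overall approach matches the paper's: establish dyadic analogues of estimates \eqref{1}--\eqref{4} and run the interpolation scheme of Theorem \ref{maintheorem}, using the interchange of suprema to pass $l^1\to l^\infty$ kernel bounds through the dyadic maximum, and citing \cite{ACHK} (Theorem 1 there) for the dyadic $l^2$ bound on the error term with the strengthened hypothesis $n\geq n_1(k)$. That is exactly the route the paper takes.

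One substantive caveat: the heuristic you offer for the dyadic error estimate would not work as a standalone argument. You write that the ``polynomial loss from summing over $\sim\Lambda$ scales is absorbed by enlarging $B$,'' but this cannot be right: the number of $\lambda\in\Gamma_{n,k}\cap[\Lambda,2\Lambda)$ is of order $\Lambda$, so a crude square function produces a factor like $\Lambda^{1/2}$, and no power of $(\log\Lambda)^{-1}$ can offset a positive power of $\Lambda$. Enlarging $B$ only strengthens the logarithmic gain, which is always dominated by a polynomial loss. The genuine mechanism in \cite{ACHK} for the dyadic maximal error bound is more delicate than a naive Sobolev embedding in the $\lambda$-parameter. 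Since you ultimately fall back on citing \cite{ACHK} for the estimate --- which is what the paper does --- your proof stands, but the sketch preceding the citation, taken literally, has a gap that you should be aware of if you ever need to reproduce the $l^2$ dyadic bound from scratch.
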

\begin{proof}
We can show the analogues of \eqref{1} through \eqref{4} and imterpolate as in the proof of Theorem \ref{maintheorem}.  The only estimates that require commentary are proving the analogues of \eqref{2} and \eqref{4}.  Instead of \eqref{4}, we need to show
\begin{equation}
     \left\| \sup_{\Lambda \leq \lambda < 2\Lambda}|E_\lambda f|\right\|_{l^2(\Z^n)} \lesssim (\log{\Lambda})^{-B}\|f\|_{l^2(\Z^n)},
\end{equation} but this is the content of Theorem 1 of \cite{ACHK}, where we need the stronger assumption that $n\geq n_1(k)$ (namely $n \geq 7$ for $k=2$).  Finally we show 
\begin{equation}
     \left\| \sup_{\Lambda \leq \lambda < 2\Lambda}|M_\lambda f|\right\|_{l^\infty(\Z^n)} \lesssim \Lambda^{\frac{2-n}{k}}(\log{N})^C\|f\|_{l^1(\Z^n)}
\end{equation}
 which proceeds by a very similar argument as \eqref{2}.  One simply uses the fact that 
 \[
 \left\|\sup_{\Lambda \leq \lambda < 2\Lambda}|K_\lambda\star f|\right\|_{l^\infty(\Z^n)} = esssup_{\bm{x}}|\sup_{\Lambda \leq \lambda < 2\Lambda}|K_\lambda\star f|| =  \sup_{\Lambda \leq \lambda < 2\Lambda}esssup_{\bm{x}}|K_\lambda\star f|
 \]
 \[
 \leq \sup_{\Lambda \leq \lambda < 2\Lambda}\|K_\lambda\|_{l^\infty(\Z^n)}\|f\|_{l^1(\Z^n)}.
 \]

\end{proof}
\bibliographystyle{amsplain}

\end{document}